\newtheorem{theorem}{Theorem}
\newtheorem{conjecture}{Conjecture}
\newtheorem{lemma}{Lemma}
\begin{document}

\baselineskip=17pt

\title{\bf A logarithmic inequality involving prime numbers}

\author{\bf S. I. Dimitrov}
\date{2019}
\maketitle
\begin{abstract}
Assume that $N$ is a sufficiently large positive number. In this paper we show
that for a small constant $\varepsilon>0$, the logarithmic inequality
\begin{equation*}
\big|p_1\log p_1+p_2\log p_2+p_3\log p_3-N\big|<\varepsilon
\end{equation*}
has a solution in prime numbers $p_1,\,p_2,\,p_3$.\\
\quad\\
{\bf Keywords}:
Diophantine inequality, logarithmic inequality, prime numbers.\\
\quad\\
{\bf  2010 Math.\ Subject Classification}:  11P55 $\cdot$ 11J25
\end{abstract}

\section{Introduction and statements of the result}
\indent

One of the most remarkable diophantine inequality with
prime numbers is the ternary Piatetski-Shapiro inequality.
The first solution is due to Tolev \cite{Tolev1}.
In 1992 he considered the the diophantine inequality
\begin{equation}\label{Tolevinequality}
|p_1^c+p_2^c+p_3^c-N|<\varepsilon,
\end{equation}
where $N$ is a sufficiently large positive number,
$p_1,\,p_2,\,p_3$ are prime numbers, $c>1$ is not an integer and $\varepsilon>0$ is a small constant.
Overcoming all difficulties Tolev \cite{Tolev1}
showed that \eqref{Tolevinequality} has a solution for
\begin{equation*}
1<c<\frac{15}{14}.
\end{equation*}
Afterwards the result of Tolev was improved by Cai \cite{Cai1} to
\begin{equation*}
1<c<\frac{13}{12},
\end{equation*}
by Cai \cite{Cai2} and Kumchev and Nedeva \cite{Ku-Ne} to
\begin{equation*}
1<c<\frac{12}{11},
\end{equation*}
by Cao and Zhai \cite{Cao-Zhai} to
\begin{equation*}
1<c<\frac{237}{214},
\end{equation*}
by Kumchev \cite{Kumchev} to
\begin{equation*}
1<c<\frac{61}{55},
\end{equation*}
by Baker and Weingartner \cite{Baker-Weingartner} to
\begin{equation*}
1<c<\frac{10}{9},
\end{equation*}
by Cai \cite{Cai3} to
\begin{equation*}
1<c<\frac{43}{36}
\end{equation*}
and this is the best result up to now.

Inspired by these profound investigations in this paper we
introduce new  diophantine inequality with prime numbers.

Consider  the logarithmic inequality
\begin{equation}\label{Myinequality}
\big|p_1\log p_1+p_2\log p_2+p_3\log p_3-N\big|<\varepsilon,
\end{equation}
where $N$ is a sufficiently large positive number and $\varepsilon>0$ is a small constant.
Having the methods of the aforementioned number theorists we expect that
\eqref{Myinequality} can be solved in primes $p_1,\,p_2,\,p_3$.
Thus we make the first step and prove the following theorem.
\begin{theorem} Let $N$ is a sufficiently large positive number.
Let $X$ is a solution of the equality
\begin{equation*}
N=2X\log(2X/3).
\end{equation*}
Then the logarithmic inequality
\begin{equation*}
\big|p_1\log p_1+p_2\log p_2+p_3\log p_3-N\big|<X^{-\frac{1}{25}}\log^8X
\end{equation*}
is solvable in prime numbers $p_1,\,p_2,\,p_3$.
\end{theorem}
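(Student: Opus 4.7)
My plan is to apply the Davenport-Heilbronn Fourier method, as in the Piatetski-Shapiro works of Tolev and successors cited above, adapted to the phase $f(u)=u\log u$. Set $X_{0}=2X/3$, so that $N=3X_{0}\log X_{0}$, restrict each $p_{i}$ to a dyadic interval $I$ containing $X_{0}$, and put $\varepsilon=X^{-1/25}\log^{8}X$. Define
\[
S(t)=\sum_{n\in I}\Lambda(n)\,e(tn\log n),\qquad J(t)=\int_{I}e(tu\log u)\,du,
\]
and use the Fejér-type kernel $K(t)=\bigl(\sin(\pi\varepsilon t)/(\pi t)\bigr)^{2}$, whose Fourier transform $\widehat{K}(y)=\max(\varepsilon-|y|,0)$ is non-negative. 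By Fourier inversion,
\[
R(N):=\int_{-\infty}^{\infty}S(t)^{3}e(-tN)K(t)\,dt=\sum_{n_{1},n_{2},n_{3}\in I}\Lambda(n_{1})\Lambda(n_{2})\Lambda(n_{3})\,\widehat{K}\Bigl(\sum_{i}n_{i}\log n_{i}-N\Bigr)\ge 0,
\]
so it is enough to prove $R(N)>0$. Split $\mathbb{R}=\mathfrak{M}\cup\mathfrak{m}\cup\mathfrak{t}$ into the major arc $|t|\le\tau$, the minor arc $\tau<|t|\le T$, and the trivial tail $|t|>T$, with $\tau$ small enough that the prime number theorem applies on $\mathfrak{M}$ and $T$ polynomial in $X$.

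On $\mathfrak{M}$ I would use Abel summation and the prime number theorem (with Vinogradov-Korobov error) to replace $S(t)$ by $J(t)$, then extract the main term via the Fourier identity $\int_{\mathbb{R}}J^{3}e(-tN)K\,dt=\iiint_{I^{3}}\widehat{K}(\sum_{i}u_{i}\log u_{i}-N)du_{1}du_{2}du_{3}$. The change of variable $v_{i}=u_{i}\log u_{i}$ (with $dv_{i}\asymp(\log X)du_{i}$) reduces this to $(\log X)^{-3}\iiint\widehat{K}(\sum_{i}v_{i}-N)dv_{1}dv_{2}dv_{3}\asymp\varepsilon^{2}X^{2}/\log X$. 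The complementary range $|t|>\tau$ in this Fourier representation is controlled by choosing $\tau$ small (e.g.\ $\tau\asymp 1/(X\log^{2}X)$), so that $\tau\varepsilon^{2}X^{3}\ll\varepsilon^{2}X^{2}/\log^{2}X$, while the trivial tail $|t|>T$ is handled by $K(t)\ll 1/t^{2}$ and $|S(t)|\ll X$.

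The decisive step, and the source of the exponent $1/25$, is the minor arc estimate. Parseval gives
\[
\int_{\mathbb{R}}|S(t)|^{2}K(t)\,dt=\sum_{m,n\in I}\Lambda(m)\Lambda(n)\,\widehat{K}(m\log m-n\log n)\ll \varepsilon X\log X,
\]
since for $m\ne n$ in $I$ one has $|m\log m-n\log n|\ge\log X>\varepsilon$, so only the diagonal survives. An $L^{\infty}\times L^{2}$ estimate then reduces the whole minor arc to a uniform pointwise bound of the shape $\sup_{t\in\mathfrak{m}}|S(t)|\ll X^{1-1/25}\log^{C}X$. I would establish this by Vaughan's identity (or Heath-Brown's identity of level three) with cutoff $U=X^{2/5}$, dissecting $S(t)$ into $O(\log^{2}X)$ Type~I sums $\sum_{m\le U}a_{m}\sum_{n}e(tmn\log mn)$ and Type~II sums $\sum_{m\sim M}\sum_{n\sim N_{0}}a_{m}b_{n}\,e(tmn\log mn)$ with $MN_{0}\asymp X$ and $U\le M\le X/U$. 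Type~I sums are estimated by Euler-Maclaurin replacement of the inner sum and van der Corput's second derivative test applied to $\int e(tmu\log mu)du$, using $\partial_{u}^{2}(mu\log mu)=1/u\asymp 1/N_{0}$. Type~II sums are handled by Cauchy-Schwarz on $m$, a Weyl shift in $n$, and a further van der Corput bound on the resulting double exponential sum. Balancing the Vaughan cutoff $U$ against the $t$-range and these estimates is the principal technical obstacle; once this bound is in hand, the three contributions combine to give $R(N)\gg\varepsilon^{2}X^{2}/\log^{C'}X>0$ and hence the theorem.
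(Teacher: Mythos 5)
Your overall architecture (Davenport--Heilbronn method, major/minor/tail decomposition, $L^\infty\times L^2$ on the minor arc, Vaughan's identity plus van der Corput for the pointwise bound) is the same as the paper's, but your choice of kernel creates a genuine gap that breaks the decomposition. You take the Fej\'er kernel $K(t)=(\sin(\pi\varepsilon t)/(\pi t))^2$, which decays only like $t^{-2}$. The trivial tail bound $\int_{|t|>T}|S(t)|^3K(t)\,dt\ll X^3/T$ is $o(\varepsilon^2X^2/\log X)$ only when $T\gg X\log X/\varepsilon^2\asymp X^{27/25}\log^{-15}X$, so your minor arc must extend all the way to $|t|\asymp X^{27/25}$. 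On that range the uniform bound $\sup|S(t)|\ll X^{24/25}\log^{C}X$ is not obtainable by the tools you invoke: every van der Corput / exponent-pair estimate for the phase $t\,n\log n$ produces terms growing in $|t|$ (e.g.\ a Type~I second-derivative bound of the shape $t^{1/2}X^{5/6}$, which already exceeds $X$ for $|t|\ge X^{1/3}$), and for $|t|\asymp X$ the second derivative $t/u$ is of order $1$, where these tests give nothing better than the trivial bound. The paper avoids this by using Tolev's compactly supported weight $\psi$ with $k=[\log X]$ continuous derivatives, whose Fourier transform $\Psi$ satisfies $|\Psi(x)|\ll |x|^{-1}(k/(2\pi|x|\varepsilon/8))^{k}$; this superpolynomial decay lets the trivial tail begin already at $|t|>K=X^{1/25}\log^{-6}X$, so the pointwise bound is only needed on $\tau\le|t|\le K$, where it does follow from Vaughan's identity and the exponent pairs $(\tfrac12,\tfrac12)$ and $(\tfrac{11}{82},\tfrac{57}{82})$. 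The fix is to replace your Fej\'er kernel by such a smooth minorant of the indicator of $[-\varepsilon,\varepsilon]$ (positivity of $\widehat K$ is not what the argument needs; support and rapid Fourier decay are).

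The rest of your outline is sound and in places cleaner than the paper's: your Parseval identity $\int|S|^2K=\sum_{m,n}\Lambda(m)\Lambda(n)\widehat K(m\log m-n\log n)$ together with the spacing observation $|m\log m-n\log n|\ge\log(X/2)>\varepsilon$ for $m\ne n$ gives the diagonal bound $\ll\varepsilon X\log X$ directly, whereas the paper derives the analogous $L^2$ bounds (its Lemma~\ref{3Int}) by a dyadic counting argument with the implicit function $y\log y=t$; and your computation of the main term by the substitution $v_i=u_i\log u_i$ matches the paper's Lemma~\ref{Thetaest} (which uses the mean-value theorem for the same implicit function). But as written, the minor-arc/tail split does not close, so the proof is incomplete without changing the kernel.
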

As usual the corresponding binary problem is out of reach of the current state of the mathematics.
In other words we have the following challenge.
\begin{conjecture} Let $N$ is a sufficiently large positive number and $\varepsilon>0$ is a small constant.
Then the logarithmic inequality
\begin{equation*}
\big|p_1\log p_1+p_2\log p_2-N\big|<\varepsilon
\end{equation*}
is solvable in prime numbers $p_1,\,p_2$.
\end{conjecture}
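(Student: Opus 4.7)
The plan is to apply the Davenport--Heilbronn variant of the circle method, tailored to the irrational phase $p\log p$. Setting $X$ to be the positive root of $N=2X\log X$, I would form the weighted exponential sum
\begin{equation*}
F(\alpha)=\sum_{X/2<p\leq X}(\log p)\,e\bigl(\alpha\, p\log p\bigr)
\end{equation*}
and a smooth Fej\'er-type kernel
\begin{equation*}
K_{\varepsilon}(\alpha)=\left(\frac{\sin(\pi\varepsilon\alpha)}{\pi\alpha}\right)^{2},
\end{equation*}
whose Fourier transform is the triangle function supported on $[-\varepsilon,\varepsilon]$. Then
\begin{equation*}
I(N)=\int_{-\infty}^{\infty}F(\alpha)^{2}\,K_{\varepsilon}(\alpha)\,e(-\alpha N)\,d\alpha
\end{equation*}
expands into a nonnegative combination of values of the triangle function at the points $p_{1}\log p_{1}+p_{2}\log p_{2}-N$, so proving $I(N)>0$ would establish the conjecture.

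Partition the real line into major arcs $\mathfrak{M}=\{|\alpha|\leq X^{-1}(\log X)^{A}\}$, minor arcs $\mathfrak{m}$ extending up to $|\alpha|=\varepsilon^{-1}$, and a trivial tail $|\alpha|>\varepsilon^{-1}$ handled by the rapid decay of $K_{\varepsilon}$. On $\mathfrak{M}$, prime-number-theorem input and partial summation replace $F(\alpha)$ by the smooth model $\int_{X/2}^{X}e(\alpha\, t\log t)\,dt$; evaluating the resulting double integral by stationary phase (the phase is strictly convex, with derivative $\alpha(\log t+1)$ monotone in $t$) produces an asymptotic of order $X\varepsilon^{2}/\log X$, matching the area-times-width heuristic for the $\varepsilon$-neighbourhood of the curve $t_{1}\log t_{1}+t_{2}\log t_{2}=N$.

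The decisive difficulty concentrates on the minor arcs. Since $\|K_{\varepsilon}\|_{1}\asymp\varepsilon$, the minor arc contribution is bounded by $\varepsilon\bigl(\sup_{\mathfrak{m}}|F(\alpha)|\bigr)^{2}$, and this must be dominated by the main term. That forces a pointwise estimate of the shape
\begin{equation*}
\sup_{\alpha\in\mathfrak{m}}|F(\alpha)|\ll\frac{X^{1/2}}{(\log X)^{C}}\qquad\text{for some large }C>0,
\end{equation*}
i.e.\ essentially square-root cancellation in $F(\alpha)$ uniformly over all $\alpha$ in a range stretching up to $\varepsilon^{-1}$. One can attempt to decompose $F(\alpha)$ via Vaughan's or Heath-Brown's identity into type~I and type~II sums and then exploit the smoothness of the phase $t\log t$ through Vinogradov's mean value theorem or van der Corput's $k$-th derivative tests; but no current technique delivers $X^{1/2-\delta}$ uniformly on the whole of $\mathfrak{m}$. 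This is precisely the same wall that blocks unconditional resolution of the binary Goldbach problem, and it is the main obstacle here.

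Consequently the realistic outcome of this plan is a conditional result. Under a suitable form of the generalised Riemann hypothesis, or a strong enough zero-density estimate, the pointwise minor-arc bound on $F(\alpha)$ becomes accessible and the argument can be pushed through. Unconditionally, the more modest goal of replacing one of $p_{1},p_{2}$ by an almost-prime $P_{r}$ of bounded order, in the spirit of Chen's theorem, looks feasible by grafting a linear sieve onto the major-arc analysis; the full binary conjecture as stated, however, lies beyond the reach of current technology, as the author himself indicates.
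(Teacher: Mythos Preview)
The statement you are asked about is not proved in the paper at all: it is labelled \emph{Conjecture}, and the author says in so many words that ``the corresponding binary problem is out of reach of the current state of the mathematics.'' There is therefore no proof in the paper to compare your proposal against.

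Your write-up correctly anticipates this. The Davenport--Heilbronn set-up you describe is the natural one and is exactly the machinery the paper uses for the \emph{ternary} inequality (there with three copies of $S(\alpha)$, a smooth cutoff $\psi$, and the minor-arc bound $|S(\alpha)|\ll X^{24/25}\log^{3}X$ obtained via Vaughan's identity and exponent pairs). You then identify precisely why the binary case fails: with only two copies of $F(\alpha)$ and $\|K_{\varepsilon}\|_{1}\asymp\varepsilon$, the minor-arc contribution is $\varepsilon\,(\sup_{\mathfrak m}|F(\alpha)|)^{2}$, and beating the major-arc term $\asymp\varepsilon X/\log X$ would require $\sup_{\mathfrak m}|F(\alpha)|\ll X^{1/2}(\log X)^{-C}$, i.e.\ essentially square-root cancellation uniformly on $\mathfrak m$. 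That is indeed the binary-Goldbach barrier, and nothing in the paper (or elsewhere) supplies such an estimate for the phase $\alpha\, n\log n$.

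So your proposal is not a proof and does not claim to be one; as a diagnosis of the obstruction it is accurate and matches the paper's own stance. If the assignment expected an unconditional proof, none exists, and you have explained why.
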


We believe that the future development of  analytic number theory
will lead to the solution of this binary  logarithmic conjecture.

\section{Notations}
\indent

For positive $A$ and $B$ we write $A\asymp B$ instead of $A\ll B\ll A$.
As usual $\mu(n)$ is M\"{o}bius' function, $\tau(n)$
denotes the number of positive divisors of $n$ and $\Lambda(n)$ is von Mangoldt's function.
Moreover $e(y)=e^{2\pi \imath y}$.
We denote by $[y]$ the integer part of $y$.
The letter $p$  with or without subscript will always denote prime number.
Let $N$ be an sufficiently large  positive number.
Let $X$ is a solution of the equality
\begin{equation*}
N=2X\log(2X/3).
\end{equation*}
Denote
\begin{align}
\label{varepsilon}
&\varepsilon=X^{-\frac{1}{25}}\log^8X\,;\\
\label{tau}
&\tau= X^{-\frac{23}{25}}\,;\\
\label{K}
&K=X^{\frac{1}{25}}\log^{-6} X\,;\\
\label{Salpha}
&S(\alpha)=\sum\limits_{X/2<p\leq X} e(\alpha p\log p)\log p\,;\\
\label{Int}
&I(\alpha)=\int\limits_{X/2}^{X}e(\alpha y\log y) \,dy\,.
\end{align}

\newpage

\section{Lemmas}
\indent

\begin{lemma}\label{Fourier}Let $k\in \mathbb{N}$.
There exists a function $\psi(y)$ which is $k$ times continuously differentiable and
such that
\begin{align*}
  &\psi(y)=1\quad\quad\quad\mbox{for }\quad\quad|y|\leq 3\varepsilon/4\,;\\[6pt]
  &0\leq\psi(y)<1\quad\mbox{for}\quad3\varepsilon/4 <|y|< \varepsilon\,;\\[6pt]
  &\psi(y)=0\quad\quad\quad\mbox{for}\quad\quad|y|\geq \varepsilon\,.
\end{align*}
and its Fourier transform
\begin{equation*}
\Psi(x)=\int\limits_{-\infty}^{\infty}\psi(y)e(-xy)dy
\end{equation*}
satisfies the inequality
\begin{equation*}
|\Psi(x)|\leq\min\bigg(\frac{7\varepsilon}{4},\frac{1}{\pi|x|},\frac{1}{\pi |x|}
\bigg(\frac{k}{2\pi |x|\varepsilon/8}\bigg)^k\bigg)\,.
\end{equation*}
\end{lemma}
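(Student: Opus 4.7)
\medskip
\noindent\textbf{Proof plan.} The plan is to construct $\psi$ as a multiple convolution of characteristic functions of intervals, following the classical device of Piatetski--Shapiro. Set $a=\tfrac{7\varepsilon}{8}$ and $h=\tfrac{\varepsilon}{8k}$; let $\chi_A$ denote the indicator of $[-a,a]$ and put $\phi=\tfrac{1}{2h}\chi_{[-h,h]}$, so that $\phi$ is a non-negative kernel of unit integral supported in $[-h,h]$. I would then set
\[
\psi=\chi_A*\underbrace{\phi*\cdots*\phi}_{k\text{ factors}}.
\]

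First I would verify the pointwise conditions on $\psi$. Since $\phi^{*k}$ is non-negative, has mass one and is supported in $[-kh,kh]$, the identity $\psi(y)=\int\chi_A(y-u)\phi^{*k}(u)\,du$ displays $\psi$ as a weighted average of values of $\chi_A$, so $0\le\psi\le1$. If $|y|\le a-kh=\tfrac{3\varepsilon}{4}$, then $|y-u|\le a$ for every $|u|\le kh$, so $\chi_A(y-u)=1$ throughout and $\psi(y)=1$. If $|y|\ge a+kh=\varepsilon$, then $|y-u|\ge a$ for every $|u|\le kh$ and $\psi(y)=0$. In the intermediate range $\tfrac{3\varepsilon}{4}<|y|<\varepsilon$ only a proper fraction of the mass of $\phi^{*k}$ contributes, giving $0\le\psi(y)<1$. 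The required $C^k$ smoothness comes from the standard fact that each convolution with the bounded, compactly supported kernel $\phi$ raises the differentiability class by one, so the $k$-fold smoothing suffices (the number of factors can be adjusted by one if an exact match of the smoothness index is desired).

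Next I would read off the Fourier transform directly from the convolution factorisation. Since $\widehat{\chi_A}(x)=\sin(2\pi ax)/(\pi x)$ and $\widehat{\phi}(x)=\sin(2\pi hx)/(2\pi hx)$, one has
\[
\Psi(x)=\widehat{\chi_A}(x)\,\widehat{\phi}(x)^{k}=\frac{\sin(2\pi ax)}{\pi x}\left(\frac{\sin(2\pi hx)}{2\pi hx}\right)^{k}.
\]
The three required estimates now follow in sequence. The bound $|\Psi(x)|\le\tfrac{7\varepsilon}{4}$ is just the trivial estimate $|\Psi(x)|\le\int\psi(y)\,dy=2a$. The bound $|\Psi(x)|\le\tfrac{1}{\pi|x|}$ is obtained by applying $|\sin(2\pi ax)|\le 1$ in the first factor and bounding each sinc factor by $1$. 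For the third estimate I would instead bound each of the $k$ sinc factors by $1/(2\pi h|x|)$; inserting $h=\varepsilon/(8k)$ turns $1/(2\pi h|x|)$ into $k/(2\pi|x|\varepsilon/8)$, giving
\[
|\Psi(x)|\le\frac{1}{\pi|x|}\left(\frac{k}{2\pi|x|\varepsilon/8}\right)^{k},
\]
as required.

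No deep input is needed; the main obstacle is purely bookkeeping: choosing $a$, $h$, and the number of convolutions so that the support, the flat region, the smoothness class, and the three Fourier bounds all come out with exactly the constants stated in the lemma.
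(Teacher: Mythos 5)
Your construction is exactly the classical convolution device behind Tolev's Lemma 1, which the paper cites without reproducing the proof, and all three Fourier bounds come out with precisely the stated constants ($\int\psi=2a=7\varepsilon/4$, $|\widehat{\chi_A}(x)|\le 1/(\pi|x|)$, and $1/(2\pi h|x|)=k/(2\pi|x|\varepsilon/8)$ for $h=\varepsilon/(8k)$). The only wrinkle is the one you flag yourself: $\chi_A*\phi^{*k}$ is $C^{k-1}$ rather than $C^{k}$, but since the lemma is used only through the Fourier bounds (with $k=[\log X]$), this off-by-one in the smoothness class is harmless.
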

\begin{proof} This is Lemma 1 of Tolev \cite{Tolev1}.
\end{proof}

\begin{lemma}\label{Exponentpairs}
Let $|f^{(m)}(u)|\asymp YX^{1-m}$  for $1<X<u\leq2X$
and $m=1,2,3,\ldots$\\
Then
\begin{equation*}
\bigg|\sum_{X<n\le 2X}e(f(n))\bigg|
\ll Y^\varkappa X^\lambda +Y^{-1},
\end{equation*}
where $(\varkappa, \lambda)$ is any exponent pair.
\end{lemma}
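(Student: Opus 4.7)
The plan is to derive this from the standard theory of exponent pairs via van der Corput's method, which is exactly tailored to produce such bounds. An \emph{exponent pair} $(\varkappa,\lambda)$ is defined precisely so that whenever $f$ is smooth on an interval of length comparable to $X$ with derivatives behaving as $|f^{(m)}(u)|\asymp YX^{1-m}$, the exponential sum $\sum e(f(n))$ is bounded by $Y^\varkappa X^\lambda$. Thus the main contribution to the stated bound is a direct application of the definition.

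What remains is to account for the additional $Y^{-1}$ term, which covers the regime of small $Y$. I would split into two cases according to the size of $Y$. When $Y\gg 1$, the defining inequality of the exponent pair suffices. When $Y\ll 1$, I would instead appeal to the Kuzmin--Landau inequality: since $|f'(u)|\asymp Y$ with $Y$ small and $|f''(u)|\asymp Y/X$ of constant sign, $f'$ is monotonic and $\|f'(u)\|\asymp Y$, so
\begin{equation*}
\bigg|\sum_{X<n\leq 2X}e(f(n))\bigg|\ll \frac{1}{\|f'\|}\ll Y^{-1}.
\end{equation*}
Combining the two regimes yields the claimed estimate.

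No step here is genuinely difficult: the substantive work in the theory of exponent pairs lies in constructing particular pairs $(\varkappa,\lambda)$ by iterating the $A$-process (Weyl--van der Corput differencing, which passes from $(\varkappa,\lambda)$ to $(\varkappa/(2\varkappa+2),\,(\varkappa+\lambda+1)/(2\varkappa+2))$) and the $B$-process (Poisson summation combined with stationary phase, which passes to $(\lambda-1/2,\,\varkappa+1/2)$) beginning from the trivial pair $(0,1)$. This machinery is classical and developed in full generality in the monograph of Graham and Kolesnik on van der Corput's method, so I would simply cite it rather than reproduce the argument.
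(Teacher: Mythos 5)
Your proposal is correct and matches the paper, which simply cites Graham--Kolesnik (Ch.~3) for this standard fact about exponent pairs. Your additional explanation of the $Y^{-1}$ term via the Kuzmin--Landau inequality in the small-$Y$ regime is accurate and is exactly how the clean combined bound is usually justified.
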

\begin{proof}
See (\cite{Graham-Kolesnik}, Ch. 3).
\end{proof}

\begin{lemma}\label{Iwaniec-Kowalski}
For any complex numbers $a(n)$ we have
\begin{equation*}
\bigg|\sum_{a<n\le b}a(n)\bigg|^2
\leq\bigg(1+\frac{b-a}{Q}\bigg)\sum_{|q|\leq Q}\bigg(1-\frac{|q|}{Q}\bigg)
\sum_{a<n,\, n+q\leq b}a(n+q)\overline{a(n)},
\end{equation*}
where $Q$ is any positive integer.
\end{lemma}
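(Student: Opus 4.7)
The plan is to use the classical sliding--window / Fej\'er-kernel trick, which converts a single sum into a second moment of short block sums. Extending $a(n)$ by zero to all of $\mathbb{Z}\setminus(a,b]$, the starting point is the identity
\[
Q\sum_{a<n\le b}a(n)=\sum_{m}\sum_{r=1}^{Q}a(m+r),
\]
valid because each term $a(n)$ is picked up exactly $Q$ times, once for each shift $r\in\{1,\dots,Q\}$. The inner sum vanishes unless $m+r\in(a,b]$ for some such $r$, so the outer variable $m$ effectively ranges over an interval of length at most $b-a+Q$.

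The next step is to apply the Cauchy--Schwarz inequality in the variable $m$, which gives
\[
Q^{2}\bigg|\sum_{a<n\le b}a(n)\bigg|^{2}\le \bigl(b-a+Q\bigr)\sum_{m}\bigg|\sum_{r=1}^{Q}a(m+r)\bigg|^{2}.
\]
Expanding the inner square and rearranging via the substitution $q=r_{1}-r_{2}$, $n=m+r_{2}$, the double sum becomes
\[
\sum_{m}\bigg|\sum_{r=1}^{Q}a(m+r)\bigg|^{2}=\sum_{|q|<Q}\bigl(Q-|q|\bigr)\sum_{\substack{a<n\\ n+q\le b}}a(n+q)\overline{a(n)},
\]
where the factor $Q-|q|$ is simply the number of ordered pairs $(r_{1},r_{2})\in\{1,\dots,Q\}^{2}$ with prescribed difference $q$. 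Dividing both sides by $Q^{2}$ yields the stated inequality. The range $|q|\le Q$ in the statement is harmless, since the term with $|q|=Q$ is killed by the weight $1-|q|/Q$.

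The main (and only) non-trivial manoeuvre is the Cauchy--Schwarz step, which gains the factor $1+(b-a)/Q$ at the cost of a smoothed autocorrelation; everything else is a combinatorial identity. There is no genuine obstacle here, so I would not expect any difficulty beyond verifying that the index bookkeeping is clean and that the outer sum over $m$ is indeed supported on an interval of the claimed length.
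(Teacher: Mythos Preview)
Your argument is correct and is exactly the standard Weyl--van der Corput proof. Note, however, that the paper does not actually prove this lemma: it simply cites Lemma~8.17 of Iwaniec--Kowalski, \emph{Analytic Number Theory}, where the same sliding-window plus Cauchy--Schwarz argument you have written appears. So your proposal matches the referenced proof rather than supplying an alternative.
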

\begin{proof}
See (\cite{Iwaniec-Kowalski}, Lemma 8.17).
\end{proof}

\begin{lemma}\label{Iest}
Assume that $F(x)$, $G(x)$ are real functions defined in  $[a,b]$,
$|G(x)|\leq H$ for $a\leq x\leq b$ and $G(x)/F'(x)$ is a monotonous function. Set
\begin{equation*}
I=\int\limits_{a}^{b}G(x)e(F(x))dx\,.
\end{equation*}
If $F'(x)\geq h>0$ for all $x\in[a,b]$ or if $F'(x)\leq-h<0$ for all $x\in[a,b]$ then
\begin{equation*}
|I|\ll H/h\,.
\end{equation*}
If $F''(x)\geq h>0$ for all $x\in[a,b]$ or if $F''(x)\leq-h<0$ for all $x\in[a,b]$ then
\begin{equation*}
|I|\ll H/\sqrt h\,.
\end{equation*}
\end{lemma}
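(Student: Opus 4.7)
The plan for the first inequality is to integrate by parts using the identity $e(F(x)) = (2\pi i F'(x))^{-1}\,\frac{d}{dx}e(F(x))$, which is legitimate because $|F'|\geq h>0$ on $[a,b]$. Setting $u(x)=G(x)/F'(x)$, I obtain
\[
I \;=\; \frac{1}{2\pi i}\bigg[u(x)\,e(F(x))\bigg]_a^b \;-\; \frac{1}{2\pi i}\int_a^b u'(x)\,e(F(x))\,dx.
\]
Each boundary term is at most $H/(2\pi h)$ by the hypotheses $|G|\leq H$ and $|F'|\geq h$, contributing $O(H/h)$. For the remaining integral, the assumed monotonicity of $u=G/F'$ gives $\int_a^b|u'(x)|\,dx=|u(b)-u(a)|\leq 2H/h$, which bounds the integral by $O(H/h)$. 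Summing produces $|I|\ll H/h$.

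For the second inequality, I may assume $F''(x)\geq h>0$ on $[a,b]$ (the negative case is symmetric), so $F'$ is strictly increasing with $F'(y)-F'(x)\geq h(y-x)$ for $x\leq y$. Let $E=\{x\in[a,b]:|F'(x)|\leq\sqrt h\}$ and split $I=\int_E+\int_{[a,b]\setminus E}$. The growth estimate forces $E$ to be an interval of length at most $2/\sqrt h$, so the trivial bound yields $\big|\int_E G\,e(F)\,dx\big|\leq 2H/\sqrt h$. The complement $[a,b]\setminus E$ consists of at most two subintervals on which $|F'|\geq\sqrt h$; applying the first derivative test (just proved) with $\sqrt h$ in place of $h$ gives a contribution of $O(H/\sqrt h)$ from each. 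Adding the two pieces yields $|I|\ll H/\sqrt h$.

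The only delicate step is controlling the neighbourhood of the possible stationary point of $F$ in the second part. The cut-off level $\sqrt h$ (equivalently, window size $1/\sqrt h$) is the unique scale that balances the trivial near-critical contribution $H\cdot(1/\sqrt h)$ against the oscillatory far-field contribution $H/\sqrt h$ coming from the first derivative test, and it is this balance that yields the factor $1/\sqrt h$. Since Lemma \ref{Iest} is a classical derivative test of van der Corput type, no further technical difficulty is anticipated beyond this standard splitting.
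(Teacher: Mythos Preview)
Your argument is correct and is precisely the classical van der Corput first/second derivative test: integration by parts against $e(F)$ for the first estimate, and the $|F'|\lessgtr\sqrt{h}$ splitting for the second. The paper itself does not prove this lemma at all; it simply refers the reader to Titchmarsh (\cite{Titchmarsh}, p.~71), where essentially the same reasoning appears (Titchmarsh phrases the first step via the second mean value theorem rather than integration by parts, which has the minor advantage of not requiring $G$ to be differentiable, but this is a cosmetic difference). In short, you have supplied the standard proof that the paper chose to outsource.
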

\begin{proof} 
See (\cite{Titchmarsh}, p. 71).
\end{proof}

\begin{lemma}\label{sumtau}
We have
\begin{align*}
&\emph{(i)}\quad\quad\quad\quad\sum\limits_{n\leq X}\tau^2(n)\ll X\log^3 X,
\quad\quad\quad\quad\quad\quad\quad\quad\quad\quad\quad\quad\quad\quad\quad\quad\\
&\emph{(ii)}\quad\quad\quad\quad\sum\limits_{n\leq X}\Lambda^2(n)\ll X\log X.
\quad\quad\quad\quad\quad\quad\quad\quad\quad\quad\quad\quad\quad\quad\quad\quad
\end{align*}
\end{lemma}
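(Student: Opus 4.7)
Both estimates in Lemma \ref{sumtau} are classical and can be established by entirely elementary manipulations, independent of the rest of the paper. My plan is to handle (i) with a standard divisor-swap argument and (ii) with a trivial comparison to Chebyshev's bound.

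For (i), I would start from $\tau(n)=\sum_{d\mid n}1$, which gives $\tau(n)^2=\sum_{d_1,d_2\mid n}1$. Exchanging the order of summation and using $\lfloor X/[d_1,d_2]\rfloor\le X/[d_1,d_2]$ yields
\[
\sum_{n\le X}\tau(n)^2\le X\sum_{d_1,d_2\le X}\frac{1}{[d_1,d_2]}.
\]
Setting $d=(d_1,d_2)$, $d_1=da$, $d_2=db$ with $(a,b)=1$, so that $[d_1,d_2]=dab$, the right side becomes
\[
X\sum_{d\le X}\frac{1}{d}\sum_{\substack{a,b\le X/d \\ (a,b)=1}}\frac{1}{ab}\ll X\sum_{d\le X}\frac{\log^2 X}{d}\ll X\log^3 X,
\]
which is the desired bound. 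An equivalent route is to verify the pointwise inequality $\tau(n)^2\le\tau_4(n)$ on prime powers (both sides are multiplicative) and then invoke the standard estimate $\sum_{n\le X}\tau_4(n)\ll X\log^3 X$.

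For (ii), I would simply use $\Lambda(n)\le\log n\le\log X$ for $n\le X$, so that $\Lambda(n)^2\le(\log X)\,\Lambda(n)$. Combined with the Chebyshev bound $\sum_{n\le X}\Lambda(n)\ll X$, this gives
\[
\sum_{n\le X}\Lambda(n)^2\le(\log X)\sum_{n\le X}\Lambda(n)\ll X\log X,
\]
as claimed.

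I do not anticipate any real obstacle here; the only care needed is in (i), where one must keep the $\gcd$ decomposition clean so that exactly three factors of $\log X$ arise and no extra spurious one.
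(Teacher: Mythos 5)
Your proof is correct. The paper states this lemma without any proof at all, treating both bounds as classical facts, so there is no argument in the source to compare against; your divisor-swap computation for (i) (or equivalently the pointwise bound $\tau^2(n)\leq\tau_4(n)$) and the trivial estimate $\Lambda^2(n)\leq(\log X)\Lambda(n)$ combined with Chebyshev for (ii) are standard, complete, and fill the gap the paper leaves.
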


\begin{lemma}\label{SI}
If $|\alpha|\leq\tau$ then
\begin{equation*}
S(\alpha)=I(\alpha)+\mathcal{O}\Big(Xe^{-(\log X)^{1/5}}\Big)\,.
\end{equation*}
\end{lemma}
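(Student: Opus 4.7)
The plan is to exploit the fact that for $|\alpha|\le\tau=X^{-23/25}$ the phase $\alpha t\log t$ is very slowly varying, so that over any sufficiently short subinterval the prime sum $\sum_{p\in J}(\log p)\,e(\alpha p\log p)$ can be replaced by a single value of $e(\alpha t\log t)$ times the corresponding short-interval prime count. Assembling these pieces will yield a Riemann-sum approximation to $I(\alpha)$. A plain Abel-summation argument will not work here: using $\theta(t)=t+R(t)$ with the PNT bound $|R(t)|\ll t\exp(-c(\log t)^{1/5})$, the resulting integral $-2\pi i\alpha\int_{X/2}^X R(t)(\log t+1)e(\alpha t\log t)\,dt$ is only $\ll\tau X^2\log X\exp(-c(\log X)^{1/5})=X^{27/25}\log X\exp(-c(\log X)^{1/5})$, and the surplus factor $X^{2/25}\log X$ cannot be absorbed by any PNT saving that is slower than polynomial. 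So one must work locally.

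Concretely, take $H=X^{23/25}(\log X)^{-1}\exp(-(\log X)^{1/5})$; since $23/25>7/12$, this is comfortably above the Huxley threshold and a short-interval prime number theorem
\begin{equation*}
\sum_{t<p\le t+H}\log p = H + O\bigl(H\exp(-c(\log X)^{1/5})\bigr)\qquad (X/2\le t\le X)
\end{equation*}
is available, following from Huxley's zero-density estimates together with the classical zero-free region of $\zeta$. Partition $(X/2,X]$ into consecutive intervals $J_k=(t_k,t_k+H]$, $k=1,\dots,K\asymp X/H$. For $p\in J_k$ one has $|e(\alpha p\log p)-e(\alpha t_k\log t_k)|\ll|\alpha|H\log X$, so combining with the short-interval PNT yields
\begin{equation*}
\sum_{p\in J_k}(\log p)\,e(\alpha p\log p) = H\,e(\alpha t_k\log t_k) + O\bigl(H\exp(-c(\log X)^{1/5}) + |\alpha|H^{2}\log X\bigr).
\end{equation*}
Summing over $k$,
\begin{equation*}
S(\alpha) = \sum_{k=1}^{K} H\,e(\alpha t_k\log t_k) + O\bigl(X\exp(-c(\log X)^{1/5}) + |\alpha|XH\log X\bigr).
\end{equation*}

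The leading sum on the right is a Riemann sum for $I(\alpha)$; the same phase-variation bound shows that its difference from $I(\alpha)$ is again $O(|\alpha|XH\log X)$. With the chosen $H$ this quantity is $\le\tau XH\log X\le X\exp(-(\log X)^{1/5})$, matching the prescribed error. The principal obstacle is the import of the short-interval PNT with exponential-type saving; this is not obtainable from partial summation against the cumulative PNT alone (as the failed Abel attempt above shows), and must be pulled in as an external ingredient — the length $H\gg X^{7/12+o(1)}$ is precisely chosen to put this input within reach.
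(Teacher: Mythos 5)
Your argument is sound and reaches the stated error term, but it takes a genuinely different route from the paper. The paper gives no self-contained proof: it defers to Lemma 14 of Tolev's 1992 paper, which proceeds through the explicit formula for $\psi(x)$ truncated at height $T=X^{1/2}$, estimates the contribution $\int_{X/2}^{X}e(\alpha t\log t)\,t^{\rho-1}\,dt$ of each zero $\rho=\beta+i\gamma$ by the derivative tests of Lemma \ref{Iest} (trivially when $|\gamma|\ll|\alpha|X\log X$), and then sums over zeros using a zero-density estimate combined with the Vinogradov--Korobov zero-free region. You instead localize to intervals of length $H=X^{23/25}(\log X)^{-1}\exp(-(\log X)^{1/5})$, import a short-interval prime number theorem as a black box, and control the phase variation $|\alpha|H\log X$ on each block; your opening observation that global Abel summation against $\theta(t)=t+R(t)$ loses an unaffordable factor $|\alpha|X\asymp X^{2/25}$ is correct and is exactly why both proofs must exploit oscillation locally (Tolev does it zero by zero, you do it interval by interval). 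Your route is arguably cleaner to state, at the cost of quoting a stronger-looking external result; Tolev's is self-contained modulo the standard zero statistics.

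One caveat you should repair: the short-interval asymptotic with relative error $\exp(-c(\log X)^{1/5})$ does \emph{not} follow from a density theorem together with the \emph{classical} (de la Vall\'ee Poussin) zero-free region. For $H=X^{\theta}$ with $\theta<1$ the explicit formula forces $T\gg X^{1-\theta}$, so $\log T\asymp\log X$ and the classical region only gives $X^{\beta-1}\le\exp(-c\log X/\log T)\asymp 1$, i.e.\ no saving at all. You need the Vinogradov--Korobov region $\beta\le 1-c(\log T)^{-2/3}(\log\log T)^{-1/3}$, which yields a saving $\exp(-c(\log X)^{1/3-\epsilon})$ --- more than enough for $\exp(-(\log X)^{1/5})$ with implied constant $1$. (Conversely, Huxley's density theorem is overkill here: since $H=X^{23/25-o(1)}$ you may take $T=X^{2/25+o(1)}$, and even Ingham's exponent suffices.) With that substitution the proof is complete; the leftover partial block at the end of $(X/2,X]$ contributes only $O(H)=O(X^{23/25})$, which is well within the error term.
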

\begin{proof}
This lemma is very similar to result of Tolev \cite{Tolev1}.
Inspecting the arguments presented in (\cite{Tolev1}, Lemma 14), (with $T=X^{\frac{1}{2}}$)
the reader will easily see that the proof of Lemma \ref{SI} can be obtained by the same manner.
\end{proof}

\begin{lemma}\label{Thetaest}
We have
\begin{equation*}
\int\limits_{-\infty}^{\infty}I^3(\alpha)e(-N\alpha)\Psi(\alpha)\,d\alpha
\gg\varepsilon\frac{X^2}{\log X}.
\end{equation*}
\end{lemma}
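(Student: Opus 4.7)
The plan is to apply Fourier inversion to rewrite the given integral as the volume of a thin slab, and then to estimate that volume by an elementary geometric argument. Expanding $I^3(\alpha)$ as a triple integral over $(X/2,X]^3$, writing $F(y_1,y_2,y_3) = y_1\log y_1 + y_2\log y_2 + y_3\log y_3 - N$, and interchanging the order of integration would give
\[
\int_{-\infty}^{\infty} I^3(\alpha)\, e(-N\alpha)\, \Psi(\alpha)\, d\alpha
= \iiint_{(X/2,X]^3} \left(\int_{-\infty}^{\infty} \Psi(\alpha)\, e(\alpha F)\, d\alpha\right) dy_1\, dy_2\, dy_3.
\]
The Fubini interchange is licensed by absolute convergence: $\Psi \in L^1(\mathbb{R})$, as one sees from the third bound of Lemma~\ref{Fourier} with any fixed $k \geq 2$ (which provides $O(|\alpha|^{-3})$ decay at infinity), while $|I^3(\alpha)| \leq (X/2)^3$ is bounded. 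The inner $\alpha$-integral then equals $\psi(F)$ by Fourier inversion.

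Since Lemma~\ref{Fourier} gives $\psi \geq 0$ everywhere and $\psi \equiv 1$ on $[-3\varepsilon/4,\,3\varepsilon/4]$, the resulting triple integral is bounded below by $\mathrm{vol}(R)$, where
\[
R = \bigl\{(y_1,y_2,y_3) \in (X/2,X]^3 : |F(y_1,y_2,y_3)| \leq 3\varepsilon/4\bigr\}.
\]

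It remains to estimate $\mathrm{vol}(R)$ from below. The defining equation $N = 2X\log(2X/3)$ is precisely designed so that $F(2X/3,2X/3,2X/3) = 0$. I would restrict $(y_2,y_3)$ to the box $B = [2X/3 - X/10,\,2X/3 + X/10]^2$, whose area is $\asymp X^2$. For each such pair, the function $g(y_1) := F(y_1,y_2,y_3)$ satisfies $g'(y_1) = \log y_1 + 1 \asymp \log X$ throughout $[X/2,X]$ and takes values of opposite signs at the endpoints with $|g(X/2)|,\, |g(X)| \asymp X\log X$, hence has a unique root $y_1^* \in (X/2,X)$. By the mean value theorem, every $y_1$ within $\varepsilon/(4\log X)$ of $y_1^*$ lies in $R$, contributing $\gg \varepsilon/\log X$ to the inner integral. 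Integrating over $B$ yields $\mathrm{vol}(R) \gg \varepsilon X^2/\log X$, as required. No single step is deep; the only items needing care are the $L^1$-integrability of $\Psi$ that justifies Fubini, and the uniform check that $y_1^*(y_2,y_3)$ stays well inside $(X/2,X)$ for $(y_2,y_3) \in B$, which follows at once from the monotonicity of $y\log y$.
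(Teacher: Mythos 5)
Your argument follows the paper's proof essentially step for step: Fourier inversion turns the integral into $\iiint\psi(F)\,dy_1\,dy_2\,dy_3$, this is bounded below by the volume of the slab $|F|\le 3\varepsilon/4$ over a sub-box of area $\asymp X^2$, and the mean value theorem shows each fiber contributes an interval of length $\gg\varepsilon/\log X$. (The paper fixes $y_1,y_2$ and solves for $y_3$ via the inverse function of $y\log y$, whose derivative is $1/(1+\log y)\asymp 1/\log X$; you fix $y_2,y_3$ and use $\partial F/\partial y_1=1+\log y_1\asymp\log X$ directly --- the same computation.)

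There is, however, one concrete numerical slip in the step you yourself flag as needing care. Your box $B=[2X/3-X/10,\,2X/3+X/10]^2$ is too large: at its far corner $y_2=y_3=\tfrac{23}{30}X$ one needs
\begin{equation*}
y_1^*\log y_1^*=N-\tfrac{23}{15}X\log\bigl(\tfrac{23}{30}X\bigr)=\tfrac{7}{15}X\log X+O(X),
\end{equation*}
whereas $\tfrac{X}{2}\log\tfrac{X}{2}=\tfrac12 X\log X+O(X)$; since $\tfrac{7}{15}<\tfrac12$, the root $y_1^*$ falls below $X/2$, so $g(X/2)$ and $g(X)$ are both positive there and the claimed sign change at the endpoints fails on a portion of $B$. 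The condition for the root to stay in $(X/2,X)$ when $y_2,y_3\in[\lambda X,\mu X]$ is, to leading order, $\tfrac12<\lambda$ and $\mu<\tfrac34$, and $\tfrac{23}{30}>\tfrac34$. This does not affect the conclusion --- a smaller box (say side $X/20$ about $2X/3$, or the paper's choice $\tfrac23<\lambda<\mu<\tfrac57$) still has area $\asymp X^2$ and gives $\Theta\gg\varepsilon X^2/\log X$ --- but as written the uniform check you assert does not hold for your $B$ and should be repaired by shrinking the box.
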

\begin{proof}
Denoting the above integral with $\Theta$, using \eqref{Int}, the definition of $\psi(y)$
and the inverse Fourier transformation formula we obtain
\begin{align}\label{HX}
\Theta&=\int\limits_{X/2}^{X}\int\limits_{X/2}^{X}\int\limits_{X/2}^{X}\int\limits_{-\infty}^{\infty}
e((y_1\log y_1+y_2\log y_2+y_3\log y_3-N)\alpha\big)
\Psi(\alpha)\,d\alpha \,dy_1\,dy_2\,dy_3\nonumber\\
&=\int\limits_{X/2}^{X}\int\limits_{X/2}^{X}\int\limits_{X/2}^{X}
\psi(y_1\log y_1+y_2\log y_2+y_3\log y_3-N)\,dy_1\,dy_2\,dy_3\nonumber\\
&\geq\mathop{\int\limits_{X/2}^{X}\int\limits_{X/2}^{X}\int\limits_{X/2}^{X}}
_{\substack{|y_1\log y_1+y_2\log y_2+y_3\log y_3-N|<3\varepsilon/4}}\,dy_1\,dy_2\,dy_3
\geq\int\limits_{\lambda X}^{\mu X}\int\limits_{\lambda X}^{\mu X}
\left(\int\limits_{\Delta}\,dy_3\right)\,dy_1\,dy_2\,,
\end{align}
where $\lambda$ and $\mu$ are real numbers such that
\begin{equation*}
\frac{1}{2}<\frac{2}{3}<\lambda<\mu<\frac{5}{7}<1
\end{equation*}
and
\begin{equation*}
\Delta=\big[X/2,X\big]
\cap\big[y'_3, y''_3\big]=\big[y'_3, y''_3\big],
\end{equation*}
where the interval $\big[y'_3, y''_3\big]$ is a solution of the inequality
\begin{equation*}
N-3\varepsilon/4-y_1\log y_1-y_2\log y_2
<y_3\log y_3<N+3\varepsilon/4-y_1\log y_1-y_2\log y_2.
\end{equation*}
Let $y$ be an implicit function of $t$ defined by
\begin{equation}\label{Implicitfunction}
y\log y=t,
\end{equation}
where
\begin{equation}\label{tasymp}
t\asymp X\log X
\end{equation}
and therefore
\begin{equation}\label{yest}
y\asymp X.
\end{equation}
The first derivative of $y$ is
\begin{equation}\label{Firstderivative}
y'=\frac{1}{1+\log y}.
\end{equation}
By \eqref{yest} and \eqref{Firstderivative} we conclude
\begin{equation}\label{y'est}
y'\asymp\frac{1}{\log X}.
\end{equation}
Thus by the mean-value theorem we get
\begin{equation}\label{Thetaest1}
\Theta\gg\varepsilon\int\limits_{\lambda X}^{\mu X}\int\limits_{\lambda X}^{\mu X}
y'\big(\xi_{y_1,y_2}\big)\,dy_1\,dy_2\,,
\end{equation}
where
\begin{equation*}
\xi_{y_1,y_2}\asymp X\log X.
\end{equation*}
From \eqref{y'est} and \eqref{Thetaest1} it follows that
\begin{equation*}
\Theta\gg\varepsilon\frac{X^2}{\log X}.
\end{equation*}
The lemma is proved.
\end{proof}

\begin{lemma}\label{3Int}
We have
\begin{align*}
&\emph{(i)}\quad\quad\quad\quad\int\limits_{-\tau}^\tau|S(\alpha)|^2\,d\alpha\ll X\log^2X,
\quad\quad\quad\quad\quad\quad\quad\quad\quad\quad\quad\quad\quad\quad\quad\quad\\
&\emph{(ii)}\quad\quad\quad\quad\int\limits_{-\tau}^\tau|I(\alpha)|^2\,d\alpha\ll X,
\quad\quad\quad\quad\quad\quad\quad\quad\quad\quad\quad\quad\quad\quad\quad\quad\\
&\emph{(iii)}\quad\quad\quad\quad\int\limits_{n}^{n+1}|S(\alpha)|^2\,d\alpha\ll X\log^2X.
\quad\quad\quad\quad\quad\quad\quad\quad\quad\quad\quad\quad\quad\quad\quad\quad
\end{align*}
\end{lemma}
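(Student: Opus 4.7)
The plan is to treat the three parts by standard Fourier-analytic arguments, exploiting throughout the fact that the phase $y\log y$ has derivative $1+\log y\asymp\log X$ on $[X/2,X]$. For part (i) I would reduce directly to (ii) via Lemma \ref{SI}: since $|\alpha|\le\tau$, the lemma gives $S(\alpha)=I(\alpha)+O\bigl(Xe^{-(\log X)^{1/5}}\bigr)$ pointwise, so
\begin{equation*}
\int_{-\tau}^{\tau}|S(\alpha)|^2\,d\alpha\ll\int_{-\tau}^{\tau}|I(\alpha)|^2\,d\alpha+\tau X^2 e^{-2(\log X)^{1/5}}.
\end{equation*}
With $\tau=X^{-23/25}$ from \eqref{tau}, the error term is $o(X)$, so (i) follows from (ii) with plenty of room to spare.

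For part (ii) I would split the integration range at $|\alpha|\asymp 1/(X\log X)$. On the small range the trivial bound $|I(\alpha)|\le X/2$ contributes $\ll X^2\cdot(X\log X)^{-1}=X/\log X$. On $1/(X\log X)<|\alpha|\le\tau$ I would apply the first-derivative part of Lemma \ref{Iest}: the phase derivative $\alpha(1+\log y)\asymp|\alpha|\log X$ is of one sign throughout $[X/2,X]$, so $|I(\alpha)|\ll 1/(|\alpha|\log X)$, and integrating the square of this bound from $1/(X\log X)$ to $\tau$ again yields $\ll X/\log X$. Thus $\int_{-\tau}^{\tau}|I(\alpha)|^2\,d\alpha\ll X$.

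For part (iii) I would open the square,
\begin{equation*}
\int_n^{n+1}|S(\alpha)|^2\,d\alpha=\sum_{X/2<p_1,p_2\le X}\log p_1\log p_2\int_n^{n+1}e\bigl(\alpha(p_1\log p_1-p_2\log p_2)\bigr)\,d\alpha,
\end{equation*}
and separate diagonal from off-diagonal. The diagonal $p_1=p_2$ contributes $\sum_{X/2<p\le X}\log^2 p\ll X\log X$ by Chebyshev. For off-diagonal pairs the mean-value theorem gives $|p_1\log p_1-p_2\log p_2|\ge(1+\log(X/2))|p_1-p_2|\gg|p_1-p_2|\log X$, hence the inner integral is $\ll 1/(|p_1-p_2|\log X)$; combined with the elementary estimate $\sum_{p_2\ne p_1}1/|p_1-p_2|\ll\log X$ and $\pi(X)\ll X/\log X$, this yields an off-diagonal total of $\ll X\log X$, well within the claimed $X\log^2 X$.

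No step presents a serious obstacle; the one subtle point is to keep track of the extra $\log X$ factor hidden in the derivative $\frac{d}{dy}(y\log y)=1+\log y$, since that factor is what produces the saving over the trivial bound in both (ii) and the off-diagonal of (iii). Apart from this observation, only Lemmas \ref{Iest} and \ref{SI} together with Chebyshev's prime bound are needed.
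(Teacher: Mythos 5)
Your treatments of parts (ii) and (iii) are sound: (ii) follows from the first-derivative test of Lemma \ref{Iest} after cutting at $|\alpha|\asymp 1/(X\log X)$, and (iii) follows by opening the square and separating the diagonal, which is close in spirit to the paper's own argument (the paper proves (i) by opening the square, bounding the oscillatory integral by $\min\big(\tau,|p_1\log p_1-p_2\log p_2|^{-1}\big)$, and counting near-diagonal pairs via the mean-value theorem applied to the inverse function of $t=y\log y$, declaring (ii) and (iii) analogous). However, your part (i) contains a genuine quantitative error. The error term in your reduction is
\begin{equation*}
\tau X^2 e^{-2(\log X)^{1/5}}=X^{27/25}e^{-2(\log X)^{1/5}},
\end{equation*}
and this is \emph{not} $o(X)$: the factor $e^{-2(\log X)^{1/5}}$ is only $X^{-o(1)}$, i.e.\ it decays more slowly than any fixed power of $X$, so the whole expression is $X^{27/25-o(1)}$, which for large $X$ exceeds $X\log^2X$ (indeed exceeds $X^{1+1/25-\delta}$ for every fixed $\delta>0$). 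The subexponential saving $e^{-(\log X)^{1/5}}$ supplied by Lemma \ref{SI} cannot absorb the polynomial excess $X^{2/25}$ coming from $\tau X^2$ versus $X$, so the proposed reduction of (i) to (ii) fails as stated.

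The gap is easy to repair. Since $\tau<1$, part (i) is an immediate consequence of part (iii) (take $n=-1$ and $n=0$), and your proof of (iii) is correct, so nothing essential is lost. Alternatively, one can argue as the paper does: open the square in (i) directly, use $\min\big(\tau,|p_1\log p_1-p_2\log p_2|^{-1}\big)$ for the integral, note that for fixed $n_1$ the number of $n_2\in(X/2,X]$ with $|n_1\log n_1-n_2\log n_2|\le 1/\tau$ is $\ll \tau^{-1}/\log X$ by the mean-value theorem, and handle the far-off-diagonal terms by a dyadic decomposition; this yields the stated bound $X\log^2X$ without ever invoking Lemma \ref{SI}.
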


\begin{proof}
We only prove  $\textmd{(i)}$. The cases $\textmd{(ii)}$ and $\textmd{(iii)}$  are analogous.

Having in mind  \eqref{Salpha}  we write
\begin{align}\label{Squareout}
\int\limits_{-\tau}^\tau|S(\alpha)|^2\,d\alpha&=\sum\limits_{X/2<p_1,p_2\leq X}\log p_1\log p_2
\int\limits_{-\tau}^\tau e((p_1\log p_1-p_2\log p_2))\alpha)\,d\alpha\nonumber\\
&\ll\sum\limits_{X/2<p_1,p_2\leq X}\log p_1\log p_2
\min\bigg(\tau,\frac{1}{|p_1\log p_1-p_2\log p_2|}\bigg)\nonumber\\
&\ll\tau\sum\limits_{X/2<p_1,p_2\leq  X\atop{|p_1\log p_1-p_2\log p_2|\leq1/\tau}}\log p_1\log p_2\nonumber\\
&+\sum\limits_{X/2<p_1,p_2\leq X\atop{|p_1\log p_1-p_2\log p_2|>1/\tau}}
\frac{\log p_1\log p_2}{|p_1\log p_1-p_2\log p_2|}\nonumber\\
&\ll U\tau\log^2X+V\log^2X,
\end{align}
where
\begin{align*}
&U=\sum\limits_{X/2<n_1,n_2\leq X\atop{|n_1\log n_1-n_2\log n_2|\leq1/\tau}}1\,,\\
&V=\sum\limits_{X/2<n_1,n_2\leq X\atop{|n_1\log n_1-n_2\log n_2|>1/\tau}}
\frac{1}{|n_1\log n_1-n_2\log n_2|}\,.
\end{align*}
On the one hand by the mean-value theorem we get
\begin{equation*}
U\ll\mathop{\sum\limits_{X/2<n_1\leq X}\sum\limits_{X/2<n_2\leq X}1}_
{n_1\log n_1-1/\tau\leq n_2\log n_2\leq n_1\log n_1+1/\tau}
\ll\frac{1}{\tau}\sum\limits_{X/2<n_\leq X}y'(\xi),
\end{equation*}
where $y$ is implicit function defined by the equation \eqref{Implicitfunction}
and $\xi$  satisfies \eqref{tasymp}.
Bearing in mind \eqref{Firstderivative}, \eqref{y'est} and the last inequality we find
 \begin{equation}\label{Uest2}
 U\ll\frac{1}{\tau}\cdot\frac{X}{\log X}\,.
\end{equation}
On the other hand
\begin{equation}\label{VVl}
V\leq\sum\limits_{l}V_l\,,
\end{equation}
where
\begin{equation*}
V_l=\sum\limits_{X/2<n_1,n_2\leq X\atop{l<|n_1\log n_1-n_2\log n_2|\leq2l}}
\frac{1}{|n_1\log n_1-n_2\log n_2|}
\end{equation*}
and $l$ takes the values $2^k/\tau,\,k=0,1,2,...$, with $l\leq X\log X$.\\

Using \eqref{Implicitfunction} -- \eqref{y'est} and  the mean-value theorem we conclude
\begin{align}\label{Vlest}
V_l&\ll\frac{1}{l}\mathop{\sum\limits_{X/2<n_1\leq X}\sum\limits_{X/2<n_2\leq X}}_
{n_1\log n_1+l< n_2\log n_2\leq n_1\log n_1+2l}1\nonumber\\
&+\frac{1}{l}\mathop{\sum\limits_{X/2<n_1\leq X}\sum\limits_{X/2<n_2\leq X}}_
{n_1\log n_1-2l< n_2\log n_2\leq n_1\log n_1-l}1\nonumber\\
&\ll\sum\limits_{X/2<n\leq X}y'(\xi)\nonumber\\
&\ll \frac{X}{\log X}\,.
\end{align}
The proof follows from \eqref{Squareout} -- \eqref{Vlest}.
\end{proof}
\begin{lemma}\label{Salphaest} Assume that $\tau\leq|\alpha| \leq K$.
Then
\begin{equation}\label{Salpha2526}
|S(\alpha)|\ll X^{24/25}\log^3X.
\end{equation}

\begin{proof}
Without loss of generality we may assume that $\tau\leq\alpha \leq K$.

From \eqref{Salpha} we have
\begin{equation}\label{SalphavonMangolt}
S(\alpha)=\sum\limits_{X/2<n\leq X}\Lambda(n)
e(\alpha n\log n)+\mathcal{O}( X^{1/2}).
\end{equation}
On the other hand
\begin{align}\label{SalphaS1alpha}
&\sum\limits_{X/2<n\leq X}\Lambda(n)e(\alpha n\log n)\nonumber\\
&=\sum\limits_{X/2<n\leq X}\Lambda(n)
e\Big(\alpha n\Big(\ \log(n+1)+ \mathcal{O}\big(1/n\big) \Big)  \Big)
\nonumber\\
&=e\big(\mathcal{O}(|\alpha|)\big)\sum\limits_{X/2<n\leq X}
\Lambda(n)e\big(\alpha n\log(n+1)\big)\nonumber\\
&\ll|S_1(\alpha)|,
\end{align}
where
\begin{equation}\label{S1alpha}
S_1(\alpha)=\sum\limits_{X/2<n\leq X}\Lambda(n)e\big(\alpha n\log(n+1)\big).
\end{equation}
We denote
\begin{equation}\label{fdl}
f(d,l)=\alpha dl \log(dl+1).
\end{equation}
Using \eqref{S1alpha}, \eqref{fdl} and Vaughan's identity (see \cite{Vaughan}) we get
\begin{equation}\label{S1alphadecomp}
S_1(\alpha)=U_1-U_2-U_3-U_4,
\end{equation}
where
\begin{align}
\label{U1}
&U_1=\sum_{d\le X^{1/3}}\mu(d)\sum_{X/2d<l\le X/d}(\log l)e(f(d,l)),\\
\label{U2}
&U_2=\sum_{d\le X^{1/3}}c(d)\sum_{X/2d<l\le X/d}e(f(d,l)),\\
\label{U3}
&U_3=\sum_{X^{1/3}<d\le X^{2/3}}c(d)\sum_{X/2d<l\le X/d}e(f(d,l)),\\
\label{U4}
&U_4= \mathop{\sum\sum}_{\substack{X/2<dl\le X \\d>X^{1/3},\,l>X^{1/3} }}
a(d)\Lambda(l) e(f(d,l)),
\end{align}
and where
\begin{equation}\label{cdad}
|c(d)|\leq\log d,\quad  | a(d)|\leq\tau(d).
\end{equation}

\textbf{Estimation of $U_1$ and $U_2$}

Consider first $U_2$ defined by \eqref{U2}.
Bearing in mind  \eqref{fdl} we find
\begin{equation}\label{fnthderivative}
\frac{\partial^nf(d,l)}{\partial l^n}
=(-1)^n\left[\frac{\alpha d^n(n-2)!}{(dl+1)^{n-1}}
+ \frac{\alpha d^n(n-1)!}{(dl+1)^{n}}\right], \quad  \mbox{ for } n\geq2.
\end{equation}
By \eqref{fnthderivative} and the restriction and the restriction
\begin{equation}\label{restriction}
X/2<dl\le X
\end{equation}
we deduce
\begin{equation}\label{fnthderivativeest}
\left|\frac{\partial^nf(d,l)}{\partial l^n}\right|\asymp\alpha d(Xd^{-1})^{1-n}, \quad  \mbox{ for } n\geq2.
\end{equation}
From \eqref{fnthderivativeest} and Lemma \ref{Exponentpairs}
with $(\varkappa, \lambda)=\big(\frac{1}{2}, \frac{1}{2}\big)$  it follows
\begin{align}\label{sumefdl}
\sum\limits_{X/2d<l\le X/d}e(f(d, l))
&\ll(\alpha d)^{1/2}\big(X d^{-1}\big)^{1/2}+(\alpha d)^{-1}\nonumber\\
&=\alpha^{1/2}X^{1/2} +\alpha^{-1} d^{-1}.
\end{align}
Now \eqref{tau}, \eqref{K}, \eqref{U2}, \eqref{cdad} and \eqref{sumefdl} give us
\begin{align}\label{U2est}
U_2&\ll\big(\alpha^{1/2}X^{5/6}+\alpha^{-1/2}\big)\log^2X\nonumber\\
&\ll\big(K^{1/2}X^{5/6}+\tau^{-1/2}\big)\log^2X\nonumber\\
&\ll X^{23/25}\log^2X.
\end{align}
In order to estimate $U_1$ defined by \eqref{U1}
we apply Abel's transformation.
Then arguing as in the estimation of $U_2$  we obtain
\begin{equation}\label{U1est}
U_1\ll X^{23/25}\log^2X.
\end{equation}

\textbf{Estimation of $U_3$ and $U_4$}

Consider first $U_4$ defined by \eqref{U4}. We have
\begin{equation}\label{U4U5}
U_4\ll|U_5|\log X,
\end{equation}
where
\begin{equation}\label{U5}
U_5=\sum_{D<d\le 2D}a(d)\sum_{L<l\le 2L\atop{X/2<dl\leq X}}\Lambda(l)e(f(d,l))
\end{equation}
and where
\begin{equation}\label{ParU5}
X^{1/3}\ll L\ll X^{1/2}\ll D\ll X^{2/3}, \quad  DL\asymp X.
\end{equation}
Using \eqref{cdad}, \eqref{U5}, \eqref{ParU5}, Lemma \ref{sumtau} $\textmd{(i)}$
 and Cauchy's inequality we obtain
\begin{align}\label{U52est1}
|U_5|^2&\ll \sum_{D<d\le 2D}\tau^2(d)\sum_{D<d\le 2D}\bigg|\sum_{L_1<l\le L_2}\Lambda(l)e(f(d,l))\bigg|^2\nonumber\\
&\ll D(\log X)^3\sum_{D<d\le 2D}\bigg|\sum_{L_1<l\le L_2}\Lambda(l)e(f(d,l))\bigg|^2,
\end{align}
where
\begin{equation}\label{maxmin1}
L_1=\max{\bigg\{L,\frac{X}{2d}\bigg\}},\quad
L_2=\min{\bigg\{2L, \frac{X}{d}\bigg\}}\,.
\end{equation}
Now from \eqref{ParU5} -- \eqref{maxmin1}  and Lemma \ref{Iwaniec-Kowalski}
with $Q\leq L$  and Lemma \ref{sumtau} $\textmd{(ii)}$ we find
\begin{align}\label{U52est2}
|U_5|^2&\ll D(\log X)^3   \sum_{D<d\le 2D}\frac{L}{Q}
\sum_{|q|\leq Q}\bigg(1-\frac{|q|}{Q}\bigg)\nonumber\\
&\times\sum_{L_1<l\le L_2\atop{L_1<l+q\le L_2}}\Lambda(l+q)\Lambda(l)
e(f(d,l+q)-f(d,l))\nonumber\\
&\ll \Bigg(\frac{LD}{Q}\sum_{0<|q|\leq Q}
\sum_{L<l\le 2L\atop{L<l+q\le 2L}}\Lambda(l+q)\Lambda(l)
\bigg|\sum_{D_1<d\le D_2}e\big(g_{l,q}(d)\big)\bigg|\nonumber\\
&\quad\quad\quad\quad\quad\quad\quad\quad\quad
\quad\quad\quad\quad\quad\quad\quad\quad\quad+\frac{(LD)^2}{Q}\log X\Bigg)\log^3X,
\end{align}
where
\begin{equation}\label{maxmin2}
D_1=\max{\bigg\{D,\frac{X}{2l},\frac{X}{2(l+q)}\bigg\}},\quad
D_2=\min{\bigg\{2D,\frac{X}{l},\frac{X}{l+q}\bigg\}}
\end{equation}
and
\begin{equation}\label{gd}
g(d)=g_{l,q}(d)=f(d,l+q)-f(d,l).
\end{equation}
It is not hard to see that the sum over negative $q$ in formula \eqref{U52est2}
is equal to the sum over positive $q$. Thus
\begin{align}\label{U52est3}
|U_5|^2\ll\Bigg(\frac{LD}{Q}\sum_{1\leq q\leq Q}
\sum_{L<l\le 2L-q}\Lambda(l+q)\Lambda(l)
\bigg|&\sum_{D_1<d\le D_2}e(g_{l,q}(d))\bigg|\nonumber\\
&\quad\quad+\frac{ (LD)^2}{Q}\log X\Bigg)\log^3X.
\end{align}
Consider the function $g(d)$.
Taking into account   \eqref{fdl}, \eqref{fnthderivative} and \eqref{gd} we get
\begin{equation}\label{gnthderivative}
g^{(n)}(d)=(-1)^n\left[\frac{\alpha (l+q)^n(n-2)!}{(d(l+q)+1)^{n-1}}
+ \frac{\alpha l^n(n-1)!}{(dl+1)^{n}}\right], \quad  \mbox{ for } n\geq2.
\end{equation}
By \eqref{restriction} and \eqref{gnthderivative}  we conclude 
\begin{equation}\label{gnthderivativeest}
|g^{(n)}(d)|\asymp\alpha L(XL^{-1})^{1-n}.
\end{equation}
From \eqref{maxmin2}, \eqref{gnthderivativeest} and Lemma \ref{Exponentpairs}
with $(\varkappa, \lambda)=\big(\frac{11}{82}, \frac{57}{82}\big)$ we obtain
\begin{align}\label{sumegd}
\sum\limits_{D_1<d\leq D_2}e(g(d))
&\ll (\alpha L)^{11/82}\big(X L^{-1}\big)^{57/82}+(\alpha L)^{-1}\nonumber\\
&=\alpha^{11/82}L^{-46/82}X^{57/82}  +\alpha^{-1} L^{-1}.
\end{align}
Bearing in mind \eqref{U52est3}, \eqref{sumegd},
Lemma \ref{sumtau} $\textmd{(ii)}$  and choosing $Q=L$   we find
\begin{align}\label{U52est4}
|U_5|^2&\ll \big( \alpha^{11/82}DLL^{36/82}X^{57/82}
+\alpha^{-1} DL+D^2L\big)\log^4X\nonumber\\
&\ll \big( K^{11/82}L^{36/82}X^{139/82}+\tau^{-1}X+D^2L\big)\log^4X.
\end{align}
Now \eqref{tau}, \eqref{K}, \eqref{ParU5} and \eqref{U52est4} give us
\begin{equation}\label{U5est}
|U_5|\ll \big( K^{11/164}L^{36/164}X^{139/164}+\tau^{-1/2}X^{1/2}+XL^{-1/2}\big)\log^2X
\ll X^{24/25}\log^2X.
\end{equation}
From  \eqref{U4U5} and \eqref{U5est} it follows
\begin{equation}\label{U4est}
U_4\ll X^{24/25}\log^3X.
\end{equation}
Working as in the estimation of $U_4$  we obtain
\begin{equation}\label{U3est}
U_3\ll X^{24/25}\log^3X.
\end{equation}
Summarizing \eqref{SalphavonMangolt}, \eqref{SalphaS1alpha},
\eqref{S1alphadecomp}, \eqref{U2est}, \eqref{U1est}, \eqref{U4est}
and \eqref{U3est} we establish the estimation \eqref{Salpha2526}.

The lemma is proved.
\end{proof}
\end{lemma}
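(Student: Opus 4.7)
The plan is the standard Vaughan-identity approach for an exponential sum twisted by a smooth phase. First I would replace $\log p$ by $\log(p+1)$ in the phase: since $|\alpha|\le K$, the resulting error factor is $e(O(|\alpha|))$, of modulus one, and pulls out of the whole sum. After passing from primes to $\Lambda$ (losing $O(X^{1/2})$ from prime powers), I would apply Vaughan's identity with splitting parameter $X^{1/3}$ to decompose the reduced sum into two Type I pieces $U_1, U_2$ (in which one variable $d$ is at most $X^{1/3}$ and is attached to a smooth coefficient) and two Type II pieces $U_3, U_4$ (in which both variables exceed $X^{1/3}$).

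For the Type I sums the phase $f(d,l)=\alpha dl\log(dl+1)$ has derivatives $|\partial_l^n f|\asymp \alpha d(X/d)^{1-n}$ on the range $X/2<dl\le X$. Applying Lemma \ref{Exponentpairs} with the exponent pair $(1/2,1/2)$ bounds the inner $l$-sum by $\alpha^{1/2}X^{1/2}+(\alpha d)^{-1}$; summing over $d\le X^{1/3}$ and using the constraints $\alpha\le K$, $\alpha\ge \tau$ gives $U_1,U_2\ll X^{23/25}\log^2 X$, with an Abel summation handling the extra $\log l$ factor appearing in $U_1$.

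The Type II sums $U_3, U_4$ are the main obstacle. After a dyadic reduction to $U_5=\sum_{D<d\le 2D}a(d)\sum_{L<l\le 2L}\Lambda(l)e(f(d,l))$ with $DL\asymp X$ and $X^{1/3}\ll L\ll X^{1/2}\ll D\ll X^{2/3}$, I would apply Cauchy--Schwarz (using $\sum_d\tau(d)^2\ll D\log^3 X$ from Lemma \ref{sumtau}(i)) to strip off the coefficients $a(d)$, then Weyl-difference the resulting square via Lemma \ref{Iwaniec-Kowalski} with shift parameter $Q=L$. The differenced inner $d$-sum has phase $g_{l,q}(d)=f(d,l+q)-f(d,l)$ with $|g^{(n)}(d)|\asymp \alpha L(X/L)^{1-n}$. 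The delicate step is the choice of exponent pair here: to absorb the resulting three error terms into $X^{48/25}\log^c X$ uniformly over the admissible range of $L$, I would invoke Lemma \ref{Exponentpairs} with $(\varkappa,\lambda)=(11/82,57/82)$, yielding the inner bound $\alpha^{11/82}L^{-46/82}X^{57/82}+(\alpha L)^{-1}$. Plugging back and using Lemma \ref{sumtau}(ii) for the $\Lambda(l)\Lambda(l+q)$ sum gives $|U_5|^2\ll(K^{11/82}L^{36/82}X^{139/82}+\tau^{-1}X+D^2L)\log^4 X$. The calibrations $\tau=X^{-23/25}$ and $K=X^{1/25}$ are precisely what make each of these three terms $\ll X^{48/25}\log^c X$ across the full $L,D$ range, so $|U_5|\ll X^{24/25}\log^2 X$; $U_3$ is handled identically and the four contributions combine to give the claim.
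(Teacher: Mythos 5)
Your proposal is correct and follows essentially the same route as the paper: the same reduction $\log n \mapsto \log(n+1)$, Vaughan's identity with splitting parameter $X^{1/3}$, the exponent pair $\big(\tfrac12,\tfrac12\big)$ for the Type I sums, and Cauchy--Schwarz plus Weyl differencing with $Q=L$ and the exponent pair $\big(\tfrac{11}{82},\tfrac{57}{82}\big)$ for the Type II sums, with the same intermediate bounds $U_1,U_2\ll X^{23/25}\log^2X$ and $|U_5|\ll X^{24/25}\log^2X$. No substantive differences to report.
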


\section{Proof of the Theorem}
\indent

Consider the sum
\begin{equation*}
\Gamma= \sum\limits_{X/2<p_1,p_2,p_3\leq X
\atop{|p_1\log p_1+p_2\log p_2+p_3\log p_3-N|
<\varepsilon}}\log p_1\log p_2\log p_3.
\end{equation*}
The theorem will be proved if we show that $\Gamma\rightarrow\infty$ as $X\rightarrow\infty$.

According to the definition of $\psi(y)$ and the inverse Fourier transformation formula we have
\begin{align}\label{GammaGamma0}
\Gamma&\geq\Gamma_0
= \sum\limits_{X/2<p_1,p_2,p_3\leq X}
\psi(p_1\log p_1+p_2\log p_2+p_3\log p_3-N)\log p_1\log p_2\log p_3\nonumber\\
&= \sum\limits_{X/2<p_1,p_2,p_3\leq X}\log p_1\log p_2\log p_3\nonumber\\
&\times\int\limits_{-\infty}^{\infty}e((p_1\log p_1+p_2\log p_2+p_3\log p_3-N)\alpha)\Psi(\alpha)\,d\alpha \nonumber\\
&=\int\limits_{-\infty}^{\infty}S^3(\alpha)e(-N\alpha)\Psi(\alpha)\,d\alpha.
\end{align}
We decompose $\Gamma_0$ in three parts
\begin{equation}\label{Gamma0decomp}
\Gamma_0=\Gamma_1+\Gamma_2+\Gamma_3.
\end{equation}
where
\begin{align}
\label{Gamma1}
&\Gamma_1=\int\limits_{-\tau}^{\tau}S^3(\alpha)e(-N\alpha)\Psi(\alpha)\,d\alpha,\\
\label{Gamma2}
&\Gamma_2=\int\limits_{\tau\leq|\alpha|\leq K}S^3(\alpha)e(-N\alpha)\Psi(\alpha)\,d\alpha,\\
\label{Gamma3}
&\Gamma_3=\int\limits_{|\alpha|>K}S^3(\alpha)e(-N\alpha)\Psi(\alpha)\,d\alpha.
\end{align}

\textbf{Estimation of $\Gamma_1$}

Denote the integrals
\begin{align}
\label{Thetataudef}
&\Theta_\tau=\int\limits_{-\tau}^\tau I^3(\alpha) e(-N\alpha) \Psi(\alpha)\,d\alpha\\
\label{Thetadef}
&\Theta=\int\limits_{-\infty}^\infty I^3(\alpha) e(-N\alpha) \Psi(\alpha)\,d\alpha.
\end{align}
For $\Gamma_1$ denoted by  \eqref{Gamma1} we have
\begin{equation}\label{Gamma1ThetaThetatau}
\Gamma_1=(\Gamma_1-\Theta_\tau)+(\Theta_\tau-\Theta)+\Theta.
\end{equation}
From \eqref{Gamma1}, \eqref{Thetataudef}, Lemma \ref{Fourier}, Lemma \ref{SI}
and Lemma \ref{3Int} $\textmd{(i)}$, \ref{3Int} $\textmd{(ii)}$ we get
\begin{align}\label{Gamma1Thetatau}
|\Gamma_1-\Theta_\tau|&\ll\int\limits_{-\tau}^{\tau}|S^3(\alpha)-I^3(\alpha)||\Psi(\alpha)|\,d\alpha\nonumber\\
&\ll\varepsilon\int\limits_{-\tau}^{\tau}|S(\alpha)-I(\alpha)|\big(|S(\alpha)|^2+|I(\alpha)|^2\big)\,d\alpha\nonumber\\
&\ll\varepsilon Xe^{-(\log X)^{1/5}}
\Bigg(\int\limits_{-\tau}^{\tau}|S(\alpha)|^2\,d\alpha+\int\limits_{-\tau}^{\tau}|I(\alpha)|^2)\bigg)\nonumber\\
&\ll\varepsilon X^2e^{-(\log X)^{1/6}}.
\end{align}
Using \eqref{tau}, \eqref{Int}, \eqref{Thetataudef}, \eqref{Thetadef}, Lemma \ref{Fourier}
and Lemma \ref{Iest} we find
\begin{align}\label{ThetatauTheta}
|\Theta_\tau-\Theta|&\ll\int\limits_{\tau}^{\infty}|I(\alpha)|^3|\Psi(\alpha)|\,d\alpha
\ll\frac{\varepsilon}{(1+\log X)^3}\int\limits_{\tau}^{\infty}\frac{d\alpha}{\alpha^3}\nonumber\\
&\ll \frac{\varepsilon}{\tau^2(1+\log X)^3}\ll\varepsilon\frac{X^2}{\log^2X}.
\end{align}
Bearing in mind \eqref{tau}, \eqref{Thetadef}, \eqref{Gamma1ThetaThetatau}, \eqref{Gamma1Thetatau},
\eqref{ThetatauTheta} and Lemma \ref{Thetaest} we conclude
\begin{equation}\label{Gamma1est}
\Gamma_1\gg \varepsilon\frac{X^2}{\log X}.
\end{equation}

\textbf{Estimation of $\Gamma_2$}

Now let us consider $\Gamma_2$ defined by \eqref{Gamma2}.
We have
\begin{equation}\label{Gamma2est1}
\Gamma_2\ll\int\limits_{\tau}^{K}|S(\alpha)|^3|\Psi(\alpha)|\,d\alpha
\ll\max\limits_{\tau\leq t\leq K}|S(\alpha)|\int\limits_{\tau}^{K}|S(\alpha)|^2|\Psi(\alpha)|\,d\alpha.
\end{equation}
Using Lemma \ref{Fourier} and Lemma \ref{3Int} $\textmd{(iii)}$ we deduce
\begin{align}\label{InttauK}
\int\limits_{\tau}^{K}|S(\alpha)|^2|\Psi(\alpha)|\,d\alpha
&\ll\varepsilon\int\limits_{\tau}^{1/\varepsilon}|S(\alpha)|^2\,d\alpha
+\int\limits_{1/\varepsilon}^{K}|S(\alpha)|^2\,\frac{d\alpha}{\alpha}\nonumber\\
&\ll\varepsilon\sum\limits_{0\leq n\leq 1/\varepsilon}\int\limits_{n}^{n+1}|S(\alpha)|^2\,d\alpha
+\sum\limits_{1/\varepsilon-1\leq n\leq K}\frac{1}{n}\int\limits_{n}^{n+1}|S(\alpha)|^2\,d\alpha\nonumber\\
&\ll X \log^3X.
\end{align}
From \eqref{Gamma2est1}, \eqref{InttauK} and Lemma \ref{Salphaest} it follows
\begin{equation}\label{Gamma2est2}
\Gamma_2\ll X^{49/25}\log^6X\ll \frac{\varepsilon X^2}{\log^2X}.
\end{equation}

\textbf{Estimation of $\Gamma_3$}

Using (\ref{Gamma3}), Lemma \ref{Fourier} and choosing $k=[\log X]$ we find
\begin{align}\label{Gama3est}
\Gamma_3&\ll \int\limits_{K}^{\infty}|S(\alpha)|^3|\Psi(\alpha)|\,d\alpha\nonumber\\
&\ll X^3\int\limits_{K}^{\infty}\frac{1}{\alpha}\bigg(\frac{k}{2\pi \alpha\varepsilon/8}\bigg)^k\,d\alpha\nonumber\\
&=X^3\bigg(\frac{4k}{\pi\varepsilon K}\bigg)^k\ll1.
\end{align}

\textbf{The end of the proof}

Bearing in mind \eqref{GammaGamma0}, \eqref{Gamma0decomp}, \eqref{Gamma1est}, \eqref{Gamma2est2} and
\eqref{Gama3est} we establish that
\begin{equation}\label{Gammaest}
\Gamma\gg \varepsilon\frac{X^2}{\log X}.
\end{equation}
Now \eqref{varepsilon} and \eqref{Gammaest}
imply that $\Gamma\rightarrow\infty$ as $X\rightarrow\infty$.

The proof of the Theorem is complete.

\vspace{5mm}

\textbf{Acknowledgments.}
The author thanks Professor Kaisa Matom\"{a}ki and Professor Joni Ter\"{a}v\"{a}inen
for their valuable remarks and useful discussions.

\vskip20pt
\footnotesize
\begin{flushleft}
S. I. Dimitrov\\
Faculty of Applied Mathematics and Informatics\\
Technical University of Sofia \\
8, St.Kliment Ohridski Blvd. \\
1756 Sofia, BULGARIA\\
e-mail: sdimitrov@tu-sofia.bg\\
\end{flushleft}

\end{document}